\documentclass[11pt]{amsart}
\usepackage{amssymb}
\usepackage{amsmath}
\usepackage{amsrefs}
\usepackage[colorlinks=true]{hyperref}
\usepackage{cleveref}
\usepackage{amscd}
\usepackage{amsthm}
\usepackage{mathtools}
\usepackage{tikz-cd}
\usepackage{bm}
\usepackage{mathrsfs}
\usepackage{enumerate}
\usepackage{enumitem}

\usepackage{xcolor}
\hypersetup{
    colorlinks,
    linkcolor={red!50!black},
    citecolor={green!50!black},
    urlcolor={blue!80!black}
}
\allowdisplaybreaks

\newtheorem{theorem}{Theorem}[section]
\newtheorem{lemma}[theorem]{Lemma}
\newtheorem{proposition}[theorem]{Proposition}
\newtheorem{corollary}[theorem]{Corollary}

\newtheorem{conjecture}[theorem]{Conjecture}
\newtheorem{question}[theorem]{Question}

\newtheorem*{assumption-no-number}{Assumption}
\newtheorem*{corollary*}{Corollary}

\theoremstyle{definition}
\newtheorem{definition}[theorem]{Definition}
\newtheorem{example}[theorem]{Example}
\newtheorem{notation}[theorem]{Notation}

\newtheorem*{notation*}{Notation}

\theoremstyle{remark}
\newtheorem{remark}[theorem]{Remark}

\numberwithin{equation}{section}

\newcommand{\R}{\mathbb{R}}

\newcommand{\rmk}{\begin{remark}}
\newcommand{\ermk}{\end{remark}}
\newcommand{\cor}{\begin{corollary}}
\newcommand{\ecor}{\end{corollary}}
\newcommand{\eq}{\begin{equation}}
\newcommand{\eeq}{\end{equation}}
\newcommand{\eqs}{\begin{equation*}}
\newcommand{\eeqs}{\end{equation*}}
\newcommand{\prop}{\begin{proposition}}
\newcommand{\eprop}{\end{proposition}}
\newcommand{\thm}{\begin{theorem}}
\newcommand{\ethm}{\end{theorem}}
\newcommand{\conj}{\begin{conjecture}}
\newcommand{\econj}{\end{conjecture}}
\newcommand{\lem}{\begin{lemma}}
\newcommand{\elem}{\end{lemma}}
\newcommand{\defi}{\begin{definition}}
\newcommand{\edefi}{\end{definition}}
\newcommand{\ex}{\begin{example}}
\newcommand{\eex}{\end{example}}
\newcommand{\alis}{\begin{align*}}
\newcommand{\ealis}{\end{align*}}
\newcommand{\pf}{\begin{proof}}
\newcommand{\epf}{\end{proof}}
\newcommand{\ali}{\begin{align}}
\newcommand{\eali}{\end{align}}
\newcommand{\qus}{\begin{question}}
\newcommand{\equs}{\end{question}}

\newcommand{\ov}{\overline}

\newcommand{\op}{\operatorname}

\newcommand{\e}{\epsilon}

\renewcommand{\ov}{\overline}

\newcommand{\Ham}{\operatorname{Ham}}
\newcommand{\Symp}{\operatorname{Symp}}
\newcommand{\CSymp}{\operatorname{\widetilde{Symp}}}

\title{A note on the cardinality of Lagrangian packings} 

\begin{document}
\author{Jo\'{e} Brendel$^*$}\thanks{$^*$Supported by the Swiss National Science Foundation Ambizione Grant PZ00P2-223460.}
\address {ETH Z\"urich, Zurich, Switzerland}
\email {joe.brendel@math.ethz.ch}
\author{Jean-Philippe Chass\'{e}$^{\dagger}$}\thanks{$^{\dagger}$Partially supported by the Swiss National Science Foundation (grant number  200021\_204107)}
\address {CRM, Montreal, Canada}
\email {jean-philippe.chasse@umontreal.ca}
\author{Laurent C\^{o}t\'{e}$^{\ddagger}$}\thanks{$^{\ddagger}$Supported by the Hausdorff Center for Mathematics (EXC-2047/1–390685813).}
\address{Universit\"at Bonn, Mathematical Institute, Bonn, Germany}
\email{lcote@math.uni-bonn.de}

\begin{abstract}
Given a symplectic manifold, can one pack uncountably many Lagrangian submanifolds in a given Hamiltonian isotopy class of this symplectic manifold? We address $C^\infty$ and $C^0$ versions of this question.
\end{abstract}

\maketitle

\section{Introduction}

\subsection{Lagrangian packing problems}
Given a symplectic manifold $M = (M, \omega)$, let $\Ham(M) \subset \op{Diff}_0(M)$ be the group of compactly-supported Hamiltonian diffeomorphisms of $M$. By definition, the elements of $\Ham(M)$ are those diffeomorphisms which occur as the time-$1$ flow of a compactly-supported Hamiltonian function $H:[0,1]\times M\to\R$. Two (closed) submanifolds $L, L' \subset M$ are said to be Hamiltonian isotopic if $L'= \phi(L)$ setwise, for some $\phi \in \op{Ham}(M)$. 

A Lagrangian packing problem in symplectic geometry usually takes the following form:  

\emph{Fix a symplectic manifold $M$ and a closed Lagrangian submanifold $L \subset M$. How many pairwise disjoint Hamiltonian isotopic copies of $L$ can you pack into $M$?}  
\begin{itemize}
	\item Sometimes this number is $1$: this happens, for example, if the Floer homology is defined and nonzero, so that $L$ is non-displaceable, e.g.\ the equator in $S^2$ or the zero section in any cotangent bundle.
	\item Sometimes this number is finite but strictly greater than $1$: this holds trivially for small contractible Lagrangians for area reasons whenever $\operatorname{dim}(M)=2$. Remarkably, Polterovich--Shelukhin exhibited  displaceable Lagrangian tori in a non-monotone $S^2 \times S^2$ with finite packing number \cite[Theorem~C]{p-s}, using asymptotic Hofer geometry. 
	\item Sometimes this number is infinite: for instance, it follows from work of Chekanov \cite{ch} that there are infinite packings of Lagrangian tori in any Darboux ball of dimension at least six. In subsequent work, Chekanov--Schlenk \cite[Theorem 1.5]{c-s} give similar examples in dimension four. See also \cite{Bre23,BreKim23} for constructions of such packings using symmetric probes. 
\end{itemize}
These results beg the following question:
\begin{question}\label{question:main}
	Do there exist uncountable Lagrangian packings?
\end{question}

As it turns out, the answer is no:
\begin{proposition}\label{proposition:intro}
	The cardinality of any Lagrangian packing is at most countable.
\end{proposition}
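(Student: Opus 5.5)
Since pairwise disjointness is the only hypothesis that survives in general, the plan is to show that one cannot fit uncountably many pairwise disjoint closed submanifolds of the same type into $M$. This will follow from a Weinstein neighborhood argument combined with second countability of $M$. Let $\{L_i\}_{i\in I}$ be a packing, with each $L_i=\phi_i(L)$ for some $\phi_i\in\Ham(M)$ and the $L_i$ pairwise disjoint. By the Weinstein neighborhood theorem, each $L_i$ has a tubular neighborhood $U_i$ symplectomorphic to a neighborhood of the zero section in $T^*L_i$. Disjointness alone does not make the $U_i$ disjoint, so this does not immediately give countability. Instead I will use the neighborhoods to extract a point-counting argument.

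\textbf{Key observation.} Fix a Riemannian metric on $M$ and choose a countable basis $\{B_n\}$ of $M$ made of small geodesically convex balls. If $L_i$ and $L_j$ are disjoint closed Lagrangians, any Lagrangian $C^1$-close to $L_i$ (a graph of a small closed $1$-form in $U_i$) must intersect $L_i$ if the form is exact, since exact graphs meet the zero section at critical points of the primitive. The $L_j$ are not known to be exact graphs over $L_i$, though. So I will use a cruder and fully general argument that needs neither the symplectic structure nor the Hamiltonian isotopy hypothesis.

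\textbf{Countability via tubular neighborhoods.} For each $i$, choose $k\in\mathbb{N}$ and $\varepsilon=1/k$ such that the normal exponential map on the $\varepsilon$-disk bundle of $L_i$ is an embedding. Choose also a finite collection of basis balls $B_{n_1},\dots,B_{n_r}$ covering $L_i$, each meeting $L_i$ in a single "graphical" chart and contained in the $\varepsilon$-tube. This assigns to $L_i$ a label in a countable set: a finite tuple of indices together with $k$. By pigeonhole, if $I$ were uncountable then uncountably many $L_i$ would share the same label. I will then show that two disjoint closed submanifolds of the same dimension with the same label are impossible. Since $L_j$ lies in the tube around $L_i$, projecting $L_j\to L_i$ along normal geodesics gives a map that is a local diffeomorphism on each chart. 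It is therefore a covering. The degree of this covering combined with disjointness does not immediately yield a contradiction, though; parallel circles in an annulus show that this topological version alone fails. So the labels must be refined to encode more than position.

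\textbf{Main obstacle and fix.} The crux is that countability must use the symplectic structure, since annuli carry uncountably many disjoint parallel circles. The extra constraint comes from Hamiltonian isotopy: the $L_i$ all share the same flux or action class. In the tube $U_i\cong$ a neighborhood of the zero section, a nearby Lagrangian section $L_j$ is the graph of a closed $1$-form $\alpha_j$. Since $L_j$ is Hamiltonian isotopic to $L_i$ through isotopies supported in a small neighborhood (which I will arrange by refining the label to include a rational approximation), the class $[\alpha_j]\in H^1(L_i;\R)$ lies in a countable set determined by the labels. More precisely, the difference of symplectic area classes $\int_{u}\omega$ over cylinders between $L_i$ and $L_j$ is governed by the Hamiltonian isotopy. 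I expect this to be the hard step: showing that $[\alpha_j]=0$, i.e.\ that the flux between $L_i$ and a $C^1$-close Hamiltonian copy $L_j$ vanishes. The approach is to compare the periods of $\alpha_j$ with $\omega$-areas of cylinders, using that Hamiltonian isotopy preserves the class of $\omega$ in $H^2(M,L)$, after fixing countably many reference cylinder classes in the label. Once $[\alpha_j]=0$, $\alpha_j=df$ vanishes somewhere, so $L_j\cap L_i\neq\emptyset$, a contradiction. Hence each label class contains at most one $L_i$, and $I$ is countable.
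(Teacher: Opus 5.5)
\textbf{There is a genuine gap, at exactly the step you flag as hard, and a second one before it.}

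\emph{The labels.} A tube radius plus a finite list of basis balls records only $C^0$ position. So two members with the same label need not be $C^1$-close: $L_j$ can sit inside the $\varepsilon$-tube of $L_i$ while folding, and the normal projection $L_j\to L_i$ need not be a local diffeomorphism, let alone a covering or a section. To write $L_j$ as the graph of a closed $1$-form in a Weinstein chart of $L_i$, you need second countability of the space of Lagrangian submanifolds in a $C^1$ (e.g.\ Whitney) topology, as in Lemma~\ref{lemma:all-in-weinstein}. That is the right replacement for your pigeonhole on labels.

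\emph{The vanishing of $[\alpha_j]$.} Your endgame needs $[\alpha_j]=0$, i.e.\ that a Hamiltonian copy $C^1$-close to $L_i$ has zero flux relative to it, so that it meets $L_i$. Nothing you set up gives this. The labels record nothing about the Hamiltonian isotopy, so you cannot ``arrange'' it to be supported near $L_i$. The area comparison you sketch yields only this: concatenate the linear isotopy to $\operatorname{graph}(\alpha_j)$ with the Hamiltonian isotopy back, which sweeps zero area. This shows $\langle[\alpha_j],\xi\rangle\in\omega(H_2(M,L_i;\mathbb{Z}))$ for integral $\xi$. That forces $[\alpha_j]=0$ for small $\alpha_j$ only when this group is discrete. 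In general it can be dense, and the vanishing you want is then a Lagrangian flux rigidity statement that is not available; the paper's argument is built to avoid it.

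\emph{What you actually need.} Since the periods of $[\alpha_j]$ on a basis of $H_1(L_i;\mathbb{Z})$ lie in a countable set, only countably many classes occur (Lemma~\ref{lemma:elementary-linear}). So among uncountably many members that are graphs over the same $L_i$, two of them, $\operatorname{graph}(\alpha_j)$ and $\operatorname{graph}(\alpha_{j'})$, have $[\alpha_j]=[\alpha_{j'}]$. Then $\alpha_j-\alpha_{j'}=df$ vanishes at a critical point of $f$, so these two members intersect each other. No statement about intersecting $L_i$ itself is needed.

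Alternatively, as in the paper's first proof, put the second-countability argument on the Hamiltonians in the $C^2$ topology rather than on the Lagrangians. Nearby Hamiltonians then have uniformly $C^1$-close isotopies along the whole path, not just at the endpoints, which forces the corresponding copies to intersect. This is the rigorous version of your ``isotopy supported in a small neighbourhood'' heuristic.
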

\begin{proof}
	Let $\mathcal{H} = C^\infty(S^1 \times M) \subset C^2(S^1 \times M)$ be endowed with the subspace topology. Let $\psi^1_H$ denote the time-$1$ flow of $H \in \mathcal{H}$ and let $d(-,-)$ be the restriction of the $C^2$-metric to $\mathcal{H}$.  
	
	Suppose there exists an uncountable packing $\{L_\alpha\}$, and choose Hamiltonians $H_\alpha \in \mathcal{H}$ so that $L_\alpha= \psi^1_{H_\alpha}(L)$. Note that, for each $\alpha$, there exists $\epsilon_\alpha>0$ such that $\psi^1_{H_\alpha}(L) \cap \phi^1_K(L)\neq \emptyset$ whenever $d(H_\alpha, K) < \e_\alpha$. Indeed, if two Hamiltonians are $C^2$-close, then their associated Hamiltonian diffeomorphism must be $C^1$-close, so that we can see one Lagrangian as a graph over the other (in a Weinstein neighbourhood). Hence the balls of size $\epsilon_\alpha/2$ around each $H_\alpha$ must all be disjoint. But $\mathcal{H}$ is second countable (being separable and metrizable), so does not admit an uncountable collection of disjoint balls.
\end{proof}

Using classical results of Laudenbach--Sikorav \cite{laudenbach1994hamiltonian}, one can in fact show that a (closed connected) half-dimensional submanifold $L \subset M$ admits an uncountable Hamiltonian packing if and only if $L$ is \emph{not} Lagrangian and its normal bundle admits a nowhere vanishing section; see Proposition~\ref{prop:nonLag_subcritical}.\footnote{Note the following amusing consequence: if $L$ admits an uncountable Hamiltonian packing, then $L$ admits a packing with the cardinality of the reals, independently of the continuum hypothesis!}

\subsection{The $C^0$ setting}
Question~\ref{question:main} can also be formulated in the context of $C^0$ symplectic geometry, where it intersects subtle questions related to flux and $C^0$-rigidity. 

The study of $C^0$ symplectic geometry was initiated by Gromov and Eliashberg’s discovery that the group of symplectomorphisms of a closed symplectic manifold is $C^0$-closed in the group of volume-preserving diffeomorphisms \cite{gromov1986,eliashberg1987}. Since then, a number of symplectic invariants and properties~---~such as a submanifold being coisotropic, the closure of $\Ham(M)$ in $\Symp(M)$, and continuity properties of spectral invariants~---~have been shown to extend in the $C^0$ topology (see e.g.\ \cite{HumiliereLeclercqSeyfaddini2015,buhovsky2015,BuhovskyHumiliereSeyfaddini2021,m-o} and the references therein). We briefly recall the relevant definitions below.

Given a symplectic manifold $M = (M, \omega)$, let $$\ov{\Ham}(M) \subset \op{Homeo}_c(M)$$ the $C^0$ closure of $\Ham(M)$ in the group of all compactly-supported homeomophisms of $M$. Elements of $\ov{\Ham}(M)$ are called Hamiltonian \emph{homeomorphisms}. 

\begin{definition}\label{definition:packing}
	Fix a smooth submanifold $\Sigma \subset M$. A \emph{$C^0$ Lagrangian packing of $\Sigma$} is a collection of pairwise disjoint smooth submanifolds $\{\Sigma_\alpha\}$ such that $\Sigma_\alpha = \phi_\alpha(\Sigma)$ for some $\phi_\alpha \in \ov{\Ham}(M, \omega)$. 
\end{definition}

We can now ask:
\begin{question}\label{question:main-c0}
	Do there exist uncountable $C^0$ Lagrangian packings (i.e.\ does there exist a (closed) Lagrangian submanifold $L \subset M$ which admits an uncountable packing in the sense of Definition~\ref{definition:packing})?
\end{question}
A naive attempt to replicate the proof of Proposition~\ref{proposition:intro} breaks down. We of course know that $\overline{\operatorname{Ham}}(M)$ is metrizable and second countable (since it sits inside the space of continuous self maps of $M$). However, given $\psi \in \overline{\operatorname{Ham}}(M)$, we \emph{do not know} whether there exists a ball $B$ centered at $\psi$ with the property that $\psi'(L) \cap \psi(L) \neq \emptyset$ whenever $\psi' \in B$. If $L$ is such that $\omega(H_2(M,L))$ is discrete, then this is proved in \cite{a-c-l-s}; the general case is essentially \cite[Conjecture E]{a-c-l-s}. Note that this rationality condition is much stronger than what appears below.

Instead, we consider a different line of argument in the spirit of \cite{a-c-l-s}, which involves an analysis of the flux morphism. This yields another elementary\footnote{Meaning both that it is equally easy and that neither proof needs $J$-holomophic curves.} proof of Proposition~\ref{proposition:intro}, and also gives the harder:\footnote{Meaning both that the argument is more difficult and that it relies on $J$-holomorphic curves (although these only enter indirectly, through results we quote).}

\begin{theorem}\label{theoreom:main-co}
	Let $M=(M, \omega)$ be a symplectic manifold. Let $L \subset M$ be a (closed connected) half-dimensional submanifold with the property that \begin{equation}\label{equation:top-assumption-intro} \langle\Gamma_{top},\iota_*(H_1(L))\rangle\cap \omega(\pi_2(M)) \subset \mathbb{R}\end{equation} is discrete. Then $L$ admits an uncountable $C^0$ packing if and only if $L$ is \emph{not} Lagrangian and its normal bundle admits a nowhere vanishing section.
\end{theorem}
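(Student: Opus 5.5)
We split the proof into the two directions.

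\emph{The ``if'' direction.} Suppose $L$ is not Lagrangian and its normal bundle $\nu L$ has a nowhere vanishing section $v$. Here we do not need $C^0$ techniques. Proposition~\ref{prop:nonLag_subcritical} (via Laudenbach--Sikorav) already gives an uncountable packing by \emph{smooth} Hamiltonian isotopies. Concretely, the idea is to push $L$ off itself along $v$ in a tubular neighbourhood. The family $\{L_t\}_{t\in(0,\epsilon)}$ of pairwise disjoint pushoffs can be realized by $\phi_t\in\Ham(M)\subset\ov{\Ham}(M)$, because Laudenbach--Sikorav shows that a non-Lagrangian half-dimensional submanifold with such a normal section is displaceable through arbitrarily small Hamiltonian isotopies in the direction of $v$. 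Hence we obtain a $C^0$ packing of cardinality continuum, and we only need to quote that proposition.

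\emph{The ``only if'' direction, reduction.} We must show two things. If $L$ is Lagrangian, or if $\nu L$ has no nowhere vanishing section, then no uncountable $C^0$ packing exists. In the second case, disjointness of $\phi(L)$ from $L$ for a homeomorphism $\phi$ isotopic to the identity forces the homological self-intersection $[L]\cdot[L]=\chi(\nu L)$ (the Euler class, suitably interpreted with local coefficients if needed) to vanish. Since $\operatorname{rank}\nu L=\dim L$, vanishing of the Euler class is equivalent to the existence of a nowhere vanishing section. So $\phi(L)\cap L\neq\emptyset$ for every $\phi\in\ov{\Ham}(M)$, and packings have cardinality $1$. This is purely topological, as $\phi(L)$ is homologous to $L$.

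\emph{The Lagrangian case, flux argument.} Now let $L$ be Lagrangian and suppose $\{L_\alpha=\phi_\alpha(L)\}$ is an uncountable $C^0$ packing. First, every $L_\alpha$ is a smooth submanifold that is a $C^0$-Hamiltonian image of a Lagrangian. By $C^0$-rigidity of Lagrangians (Humili\`ere--Leclercq--Seyfaddini; here $J$-curves enter), $L_\alpha$ is itself Lagrangian. Second, we attach to each $L_\alpha$ a flux-type invariant. Choose a Weinstein neighbourhood of $L_\alpha$; nearby Lagrangians $C^1$-close to $L_\alpha$ are graphs of closed $1$-forms and are recorded by classes in $H^1(L_\alpha;\R)$. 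Since $M$ is second countable and the space of closed submanifolds is separable in the $C^1$ topology, an uncountable family contains $L_\alpha$ and a sequence $L_{\alpha_n}\to L_\alpha$ in $C^1$, with $L_{\alpha_n}$ the graph of $\sigma_n$ and $[\sigma_n]\to 0$. Disjointness forces each $\sigma_n$ to be nowhere vanishing, so in particular $[\sigma_n]\neq 0$ (an exact form vanishes somewhere). Third, we show that the class $[\sigma_n]$, paired with $H_1(L)$, lies in $\langle\Gamma_{top},\iota_*H_1(L)\rangle\cap\omega(\pi_2(M))$. This comes from comparing the composite $C^0$-Hamiltonian homeomorphism $\phi_{\alpha_n}\phi_\alpha^{-1}$, a limit of Hamiltonian diffeomorphisms, with the flux: its topological flux lies in $\Gamma_{top}$, and areas of cylinders bounded by $L_\alpha$ and $L_{\alpha_n}$ differ from these by spheres. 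By the discreteness hypothesis~\eqref{equation:top-assumption-intro}, small classes must then be zero, contradicting $[\sigma_n]\neq0$.

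The main obstacle is the third step. We need to define the flux of a $C^0$ Hamiltonian homeomorphism restricted to loops on $L$, and to prove that the $\omega$-area swept by $\gamma\subset L$ under $\phi_{\alpha_n}\phi_\alpha^{-1}$ differs from $\langle[\sigma_n],\gamma\rangle$ only by $\omega(\pi_2(M))$ plus an element of $\Gamma_{top}$. To do this we would follow \cite{a-c-l-s}: approximate by smooth Hamiltonian isotopies, whose swept areas vanish modulo $\omega(\pi_2)$, and pass to the limit, using that the limit of swept areas is well defined modulo $\Gamma_{top}$.
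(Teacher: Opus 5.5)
\textbf{There is a genuine gap in your third step.} You claim that $\langle[\sigma_n],\gamma\rangle$ lies in $\langle\Gamma_{top},\iota_*H_1(L)\rangle\cap\omega(\pi_2(M))$. This is false in general, even for smooth Hamiltonian diffeomorphisms. In $M=\mathbb{C}^2$ take the product tori $T(a,b)=\{\pi|z_1|^2=a,\ \pi|z_2|^2=b\}$ with $a\neq b$ close. The coordinate swap lies in the connected group $U(2)$, so after cutting off, $T(b,a)$ is the image of $T(a,b)$ under a compactly supported Hamiltonian diffeomorphism. The two tori are disjoint. In action-angle coordinates, $T(b,a)$ is the graph over $T(a,b)$ of the closed form $\frac{b-a}{2\pi}(d\theta_1-d\theta_2)$, whose class is nonzero. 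Here $\Gamma_{top}=0$ and $\omega(\pi_2(M))=0$, so the discrete set in the hypothesis is $\{0\}$. The argument you sketch never uses the convergence $L_{\alpha_n}\to L_\alpha$, so it applies verbatim to this pair and would force the class to vanish.

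The failure is in the sentence ``areas of cylinders bounded by $L_\alpha$ and $L_{\alpha_n}$ differ from these by spheres''. Push $\gamma$ out along the Weinstein graph and return along an (approximating) Hamiltonian isotopy. You land on a loop of $L_\alpha$ that need not be $\gamma$, nor even homologous to it in $L_\alpha$; in the example the monodromy exchanges the two circle factors. So you only get a relative class, and the correct statement is $\langle[\sigma_n],\gamma\rangle\in\omega(H_2(M,L_\alpha;\mathbb{Z}))$. That set is countable but can be dense in $\R$: for $T(a,b)$ with $a/b\notin\mathbb{Q}$ it is $a\mathbb{Z}+b\mathbb{Z}$. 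Hence ``small classes must be zero'' does not follow, and an accumulation-point argument of this kind cannot close the proof.

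\textbf{How the paper handles this.} It uses the same ingredients as you, but places the discreteness hypothesis elsewhere and finishes by counting rather than by smallness. Let $K=\phi(L)$ with $\phi\in\ov{\Ham}(M)$, and let $\phi_k\to\phi$ be smooth approximations. The paper builds chains $Z_{k,\xi}$ with boundary on $L$ from three pieces:
\begin{enumerate}
\item the Weinstein cylinder;
\item a cylinder along a short homotopy $f_k$ between $\phi$ and $\phi_k$;
\item the trace of the Hamiltonian isotopy of $\phi_k$, run backwards.
\end{enumerate}
It shows $\omega(Z_{k,\xi})\to\langle[\alpha_K],\xi\rangle$, with the short cylinder controlled by a Membrez--Opshtein estimate. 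The difference $\omega(Z_{k,\xi})-\omega(Z_{\ell,\xi})$ is the area of the torus swept by $\xi$ under the loop $\phi_\ell\#f_\ell\#\ov{f}_k\#\ov{\phi}_k$ in $C^0(M,M)$. So it lies in $\langle\Gamma_{top},\iota_*H_1(L)\rangle$, and also in $\omega(\pi_2(M))$ because Floer theory supplies a point whose trajectory under this loop is contractible.

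Discreteness is used only to make $\omega(Z_{k,\xi})$ eventually constant. Then $\langle[\alpha_K],\xi\rangle$ is the area of an honest class in $H_2(M,L;\mathbb{Z})$. Testing against a basis of $H_1(L;\mathbb{Z})$, only countably many classes $[\alpha_K]$ can occur. Two members with the same class intersect, since their forms differ by an exact form. This pigeonhole is what you need in place of your smallness argument.

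\textbf{A smaller problem: the normal-bundle step fails for non-orientable $L$.} There $L$ carries only a mod $2$ class, and $e(\nu L)$ can be even and nonzero. For example, an $\mathbb{RP}^2\subset\mathbb{C}^2$ with normal Euler number $\pm 2$ has no nowhere vanishing normal section, yet it is displaced by a cut-off translation. So ``packings have cardinality $1$'' is false in that case. Argue instead as in the paper's lemma on uncountable packings. Second countability produces two $C^\infty$-close members, so some $\phi_\alpha(L)$ has a nowhere vanishing normal section. Then transfer this to $L$ via the twisted Euler class, which is invariant under the ambient homeomorphism $\phi_\alpha$.

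Your use of Humili\`ere--Leclercq--Seyfaddini to know the members are Lagrangian is correct (the paper uses this implicitly), and so is your ``if'' direction.
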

Here $\Gamma_{top}$ is the \emph{topological flux group} of $M$, a notion which we review in Subsection~\ref{subsection:lag-flux}. Essentially, $\langle\Gamma_{top},\iota_*(H_1(L))\rangle$ consists of evaluations of $\omega$ on a suitable subgroup of $H_2(M)$ associated to certain tori $S^1\times S^1\to M$ such that the restriction to $\{1\}\times S^1$ is a loop in $L$. In practice, \eqref{equation:top-assumption-intro} is reasonably checkable: it obviously holds whenever $\omega(\pi_2(M))$ is discrete, or when $\iota_*:H_1(L;\R)\to H_1(M;\R)$ is zero (so e.g.\ it is enough for $M$ to be simply connected). Nevertheless, we expect that the conclusion of Theorem~\ref{theoreom:main-co} holds for all symplectic manifolds, without any additional topological condition.

\section{Preparations}

\subsection{Conventions}
All rings are understood to be commutative and unital. All manifolds are by definition boundaryless, Hausdorff and second-countable. Unless otherwise indicated, all manifolds and all maps between them are assumed to be smooth. 

\subsection{Some topological properties of the space of Lagrangians}
The purpose of this paragraph is to show that the space of compact Lagrangian embeddings into some symplectic manifold $(M,\omega)$ is second-countable and to deduce the following. 

\begin{lemma}\label{lemma:all-in-weinstein}
	Let $M= (M, \omega)$ be a connected symplectic manifold. Let $\mathcal{X}$ be a set of pairwise disjoint, compact Lagrangian submanifolds of $M$. If $\mathcal{X}$ is uncountable, then there exists $K \in \mathcal{X}$ and a Weinstein neighborhood of $K$ containing uncountably many elements of $\mathcal{X}$.
\end{lemma}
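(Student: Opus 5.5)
We will use the second countability of the space of compact Lagrangian submanifolds together with a pigeonhole (condensation point) argument. Consider the set $\mathcal{L}$ of compact Lagrangian submanifolds of $M$. We topologize it so that a basic neighborhood of $K \in \mathcal{L}$ is determined by a Weinstein neighborhood $\Phi_K : U_K \supset T^*K \to M$ together with a $C^1$-small neighborhood $\mathcal{V}$ of the zero section in $C^\infty(K; T^*K)$. A Lagrangian $K'$ then lies in this neighborhood if $K'$ is the image under $\Phi_K$ of the graph of a closed $1$-form in $\mathcal{V}$. Equivalently, $\mathcal{L}$ is the quotient of the space of Lagrangian embeddings $\op{Emb}_{Lag}(N, M)$ (with $N$ ranging over diffeomorphism types) by $\op{Diff}(N)$, with the $C^1$ (or $C^\infty$) topology.

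The first step is to show this space is second countable. There are only countably many diffeomorphism types of compact manifolds, so we may fix $N$. Since $M$ is second countable, it embeds in some $\R^n$. Then $C^\infty(N, M) \subset C^\infty(N, \R^n)$, and the latter is separable and metrizable in the $C^1$ topology. Subspaces of separable metrizable spaces are second countable, and quotient-like images under open maps preserve second countability. We will check that the projection $\op{Emb}_{Lag}(N,M) \to \mathcal{L}_N$ is open for the topology described above. This is the standard fact that $C^1$-close Lagrangian embeddings have images that are graphs in a Weinstein chart, up to reparametrization.

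The second step is the counting argument. Fix a countable base $\{B_i\}$ of $\mathcal{L}$. For each $K \in \mathcal{X}$, fix a Weinstein neighborhood $W_K$. The set of Lagrangians which are graphs of small closed $1$-forms in $W_K$ is a neighborhood of $K$, so it contains some basic open set $B_{i(K)} \ni K$. Since $\mathcal{X}$ is uncountable and there are countably many indices, some $i$ satisfies $i(K) = i$ for uncountably many $K \in \mathcal{X}$. Pick one such $K_0$. We then need every other $K$ with $i(K) = i$ to lie inside $W_{K_0}$. This holds because $K \in B_i \subset$ (the graph neighborhood of $K_0$), so $K$ is a graph over $K_0$ in $W_{K_0}$, and in particular $K \subset W_{K_0}$. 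This gives the required $K_0$ and its Weinstein neighborhood.

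The main obstacle is the first step: being careful that the topology on unparametrized Lagrangians, defined via Weinstein charts, coincides with the quotient $C^1$ topology and is second countable. An alternative would be to view Lagrangians as compact subsets and use the Hausdorff metric, which is separable on compact subsets of $M$. However, Hausdorff closeness alone does not force a submanifold to lie in a Weinstein neighborhood as a graph. We only need containment, though, and Hausdorff closeness does give $K \subset W_{K_0}$ once $K$ is close enough to $K_0$. So the simplest route is the following. The space of compact subsets of $M$ with the Hausdorff metric, taken in a metric on $M$ compatible with its topology, is separable. For each $K$, let $r_K > 0$ be such that the $r_K$-neighborhood of $K$ lies in $W_K$. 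Choose a rational $r$ and an element $D$ of a countable dense set such that uncountably many $K$ satisfy $d_H(K, D) < r/4$ and $r_K > r$. Then any two of these $K$ are within $r/2$ of each other, so each lies inside $W_{K_0}$. This avoids the embedding-space technicalities entirely, and it is the version I would write.
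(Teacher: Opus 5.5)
Your first route is, up to packaging, the paper's proof. The paper shows that $\Sigma^\infty_{Lag}(L,M)$ is second countable, as a quotient of $C^\infty_{emb}(L,M)$ by $\op{Diff}(L)$. It notes that the sets $\mathcal{O}_{Lag}(\mathcal{U}_K,\tilde\iota_K)$, consisting of Lagrangians that are graphs of closed $1$-forms in a Weinstein chart, are open and cover $\mathcal{X}$. It then extracts a countable subcover. Your pigeonhole over a countable base is the same Lindel\"of argument. Your remark that the projection to the quotient must be open is a point the paper leaves implicit. Had you written that version, the proof would be fine.

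The version you say you would actually write, via the Hausdorff metric, falls short. It is correct as far as it goes, but it only gives set-theoretic containment $K\subset W_{K_0}$.

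In the paper, the lemma's conclusion (as its proof delivers it) is that uncountably many $K\in\mathcal{X}$ lie in $\mathcal{O}_{Lag}(\mathcal{U}_{K_0},\tilde\iota_{K_0})$. That is, $K=\tilde\iota_{K_0}(\operatorname{graph}\alpha_K)$ with $d\alpha_K=0$. This is exactly how the lemma is used. The second proof of Proposition~\ref{proposition:intro} and the proof of Proposition~\ref{proposition:lagrangian-countable_C0} both begin by placing the whole packing inside $\mathcal{O}_{Lag}(\mathcal{U},\tilde\iota)$. They then work with the classes $[\alpha_K]\in H^1(L;\mathbb{R})$: first in the flux computation, and then via the fact that $[\alpha_K]=[\alpha_{K'}]$ forces $K\cap K'\neq\emptyset$.

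As you observe yourself, Hausdorff closeness gives no control on tangent planes. A Lagrangian contained in $W_{K_0}$ need not be graphical over $K_0$, or even diffeomorphic to it, so no $\alpha_K$ is available. Containment is therefore not all that is needed. The essential input is a $C^1$-type topology on unparametrized Lagrangians in which the graph sets $\mathcal{O}_{Lag}$ are open, so you should keep your first route.
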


To start, let $L$ be a smooth, compact manifold, and let $M$ be a smooth manifold. Let $C^\infty(L, M)$ be the set of infinitely differentiable maps from $L$ to $M$, endowed with the Whitney topology \cite[II.\S 3]{g-g}. Recall that a basis for this topology is given by the preimages of all open subsets $U \subset J^k(L, M), 1 \leq k < \infty$ under the natural map sending a function to its $k$-th jet. The space $C^\infty(L, M)$ is metrizable and separable, hence second countable \cite[Sec.\ 1.1]{hirsch}.

Since the $J^k(L, M)$ are manifolds and hence second-countable, $C^\infty(L, M)$ is second-countable. Let $C^\infty_{emb}(L, M) \subset C^\infty(L, M)$ be the (open) subset of embeddings and let 
\begin{equation*}
	\Sigma^\infty(L, M) := C^\infty_{emb}(L, M)/ \op{Diff}(L)
\end{equation*}
be the space of embedded smooth submanifolds of $M$ which are diffeomorphic to $L$. Here $\op{Diff}(L)$ acts on the right by precomposition. It follows that $\Sigma^\infty(L, M)$ is second-countable. 

Now let $(M,\omega)$ be a symplectic manifold and denote by $\Sigma ^\infty_{Lag}(L, M) \subset \Sigma ^\infty_{emb}(L, M)$ the subset of Lagrangian embeddings of $L$ into $M$. It follows that $\Sigma ^\infty_{Lag}(L, M)$ is second-countable. 

\begin{definition}
	Let $(M, \omega)$ be a symplectic manifold and let $\iota: L \to M$ be a Lagrangian embedding. A \emph{Weinstein neighborhood} subordinate to the embedding $\iota$ is the data $(\mathcal{U}, \tilde{\iota})$ of an open subset $\mathcal{U} \subset T^*L$ containing the zero section, and a symplectic embedding $\tilde{\iota}: \mathcal{U} \to M$ extending $\iota$. 
\end{definition}

Let $\iota: L \to (M, \omega)$ be a Lagrangian embedding. Fix a Weinstein neighborhood $(\mathcal{U}, \tilde{\iota})$ and let
\begin{equation}
	\mathcal{O}_{Lag}(\mathcal{U},\tilde{\iota}) 
	\subset \Sigma^\infty_{Lag}(L, M)
\end{equation}
be the subset of those elements $K \in \Sigma^\infty_{Lag}(L, M)$ such that there exists a $1$-form $\alpha_K \in \Omega^1(L)$ on $L$ such that $\op{graph}(\alpha_K) \subset \mathcal{U}$ and $\tilde{\iota}(\op{graph}\alpha_K)= K$. Since $K$ is Lagrangian, any such $1$-form is necessarily closed, $d\alpha_K = 0$. The sets $\mathcal{O}_{Lag}(\mathcal{U},\tilde{\iota}) $ are open. We are now in a position to prove the lemma.\\

\begin{proof}[Proof of Lemma \ref{lemma:all-in-weinstein}]
Since there are countably many diffeomorphism types, we may assume all elements of $\mathcal{X}$ are diffeomorphic to some compact manifold $L$. In other words,  $\mathcal{X}$ is a subset of $\Sigma^\infty_{Lag}(L, M)$. If we endow $\mathcal{X}$ with the subspace topology, it is second-countable, since $\Sigma^\infty_{lag}(L, M)$ is.  For each $K \in \mathcal{X}$, choose a Weinstein neighborhood $(\mathcal{U}_K, \tilde{\iota}_K)$, where $\iota_K: K \hookrightarrow M$ is the tautological inclusion.  The $\mathcal{O}(\mathcal{U}_K, \tilde{\iota}_K)$ are open in $\Sigma^\infty_{lag}(L, M)$; hence their restriction to $\mathcal{X}$ is also open. Hence they form an open cover of $\mathcal{X}$. By second-countability, there exists a countable subcover.  Hence there exists some $K \in \mathcal{X}$ such that uncountably many of the elements of $\mathcal{X}$ are contained in $\mathcal{O}(\mathcal{U}_K, \tilde{\iota}_K)$. 
\end{proof}

\subsection{Notions of flux}\label{subsection:lag-flux}
There are several notions of ``flux'' in symplectic geometry which all arise from variants of the same construction. We briefly review (some of) these here.

Let $M=(M, \omega)$ be a symplectic manifold. 
Let $\{L_t\}_{t \in [0,1]}$ be a Lagrangian isotopy. 
The \emph{(Lagrangian) flux} of the isotopy is the class $\operatorname{Flux}(L_s) \in H^1(L; \mathbb{R})$ defined as follows: let $\xi$ be (a representative of) any cycle in $H_1(L; \mathbb{R})$. 
Let $Z_\xi: [0,1] \times S^1 \to M$ be the trace of $\xi$ under the isotopy. Then $\operatorname{Flux}(L_s) \in H^1(L; \mathbb{R})$ is the unique class satisfying $\langle \omega, Z_\xi \rangle = \langle \operatorname{Flux}(L_s), \xi \rangle$. It is a basic fact that the Lagrangian flux of an isotopy induced by an ambient Hamiltonian isotopy on $M$ vanishes.

One can similarly define the \emph{(symplectic) flux} of a symplectic isotopy $\{\psi_s\}$ to be the unique class $\operatorname{Flux}_{\omega}(\psi_s)$ of $H^1(M;\R)$ such that $\langle \omega, Z_\xi \rangle = \langle \operatorname{Flux}_{\omega}(\psi_s), \xi \rangle$ for all loops $\xi$ of $M$. Alternatively, it is the Lagrangian flux of the Lagrangian isotopy $\{\operatorname{graph}\psi_s\}$ in $(M\times M,\omega\oplus -\omega)$. The symplectic flux induces a homomorphism $\operatorname{Flux}_\omega:\CSymp(M)\to H^1(M;\R)$. Its image is called the \emph{flux group of $M$} and denoted by $\Gamma_\omega \subset H^1(M; \mathbb{R})$.

More generally, let $f_s: N \to M, s\in [0,1]$, be a continuous family of continuous maps and set $F:[0,1] \times N \to M, (s,n)\mapsto f_s(n)$. There is a natural map $H_1(N; \mathbb{Z}) \to \mathbb{R}$ sending a cycle $\xi$ to $\langle\omega, F([0,1] \times \xi) \rangle$ which induces a homomorphism 
$\operatorname{Flux}_{top}: \pi_1(C^0(N, M)) \to H^1(N; \mathbb{R}).$
\begin{definition}
	The image of the map $\operatorname{Flux}_{top}: \pi_1(C^0(N, M)) \to H^1(N; \mathbb{R})$ is called the \emph{topological flux group} and is denoted by $\Gamma_{top}\subset H^1(N; \mathbb{R})$.
\end{definition}

\section{Proofs}
We first record an elementary linear algebra lemma which will be used later.
\begin{lemma}\label{lemma:elementary-linear}
	Let $L$ be a finite dimensional lattice and set $V:= L \otimes_\mathbb{Z} \mathbb{R}$. If $S \subset V^*$ is a subset with the property that for every $\xi \in L \subset V$, the set 
	\begin{equation}\label{equation:finite-lattice}\{\langle s, \xi \rangle \mid s \in S\} \subset \mathbb{R}\end{equation} is countable, then $S$ is countable.  (Here $\langle -, - \rangle: V^* \times V \to \mathbb{R}$ denotes the natural pairing of $V$ with its dual.)
\end{lemma}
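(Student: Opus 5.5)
Fix a $\mathbb{Z}$-basis $\xi_1,\dots,\xi_n$ of the lattice $L$, which is also an $\mathbb{R}$-basis of $V$. We will show that the map
\[
\Phi: S \to \mathbb{R}^n, \qquad s \mapsto (\langle s,\xi_1\rangle,\dots,\langle s,\xi_n\rangle)
\]
is injective and has countable image. Together these give that $S$ is countable.

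For injectivity, note that an element of $V^*$ is a linear functional on $V$. Such a functional is determined by its values on a basis of $V$. So if $\Phi(s)=\Phi(s')$, then $s$ and $s'$ agree on $\xi_1,\dots,\xi_n$, and hence $s=s'$.

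For the image, apply the hypothesis \eqref{equation:finite-lattice} to each basis vector $\xi_i$. This says that the set $A_i:=\{\langle s,\xi_i\rangle \mid s\in S\}\subset\mathbb{R}$ is countable. By construction $\Phi(S)\subset A_1\times\cdots\times A_n$. A finite product of countable sets is countable, so $\Phi(S)$ is countable. Since $\Phi$ is injective, $S$ is in bijection with $\Phi(S)$ and is therefore countable.

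There is no serious obstacle here. The only point needing care is that a basis of the lattice $L$ really spans $V=L\otimes_\mathbb{Z}\mathbb{R}$ over $\mathbb{R}$, so that functionals in $V^*$ are determined by their values on it. This holds because $L$ is a finitely generated free abelian group, and tensoring a free module with $\mathbb{R}$ sends a $\mathbb{Z}$-basis to an $\mathbb{R}$-basis. We only need the hypothesis for the $n$ basis vectors, not for every $\xi\in L$.
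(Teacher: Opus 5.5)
Your proof is correct and is essentially the paper's argument. The paper also fixes a basis (of lattice vectors, as its footnote notes) and observes that the coordinates of each $s\in S$ range over countable sets, which is exactly your injective map $\Phi$ into $A_1\times\cdots\times A_n$; your remark that only the $n$ basis vectors are needed matches that footnote.
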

\begin{proof}
	Choose a basis for $V$ and let $s =(s_1, \dots, s_n) \in S$. By testing against the dual basis, we find that the coordinates $s_i$ take values in a countable set.\footnote{So in particular, it is enough to know \eqref{equation:finite-lattice} on any subset of lattice elements which form a real basis for $V$.}
\end{proof}

\subsection{The $C^\infty$ case}As a warm-up for the proof of Theorem~\ref{theoreom:main-co}, we give an alternative proof of Proposition~\ref{proposition:intro} based on the Lagrangian flux as discussed in Subsection~\ref{subsection:lag-flux}.

\begin{proof}[Second proof of Proposition~\ref{proposition:intro}]
	Suppose for contradiction that $\mathcal{P}$ is an uncountable packing. By Lemma~\ref{lemma:all-in-weinstein}, we may assume without loss of generality that $\mathcal{P}$ is entirely contained in $\mathcal{O}_{Lag}(\mathcal{U}, \tilde{\iota})$ for some Weinstein neighborhood $(\mathcal{U}, \iota)$ of $L$.
	For each $K \in \mathcal{P}$, there is a $1$-form $\alpha_K \in \Omega^1(L)$ so that $K = \operatorname{graph}(\alpha_K)$. It is enough to prove that the set of $1$-forms $\alpha_K$ representing elements of $\mathcal{P}$ is countable.
	
	The first step is to construct a piecewise smooth path of Lagrangians $(K_s)_{s \in [0,1]}$ as follows:
	\begin{enumerate}
		\item first apply the linear isotopy $[0,1/2] \ni s \mapsto \tilde{\varphi}(\op{graph}(2s \alpha_K))$
		\item then let $\{K_s\}_{s \in [1/2, 1]}$ be any Hamiltonian isotopy taking $K$ back to $L$ setwise (this exists since $K \in \mathcal{P}$)
	\end{enumerate}
	
	Now fix a curve $\xi: S^1 \to L$ representing a cycle $[\xi] \in H_1(L; \mathbb{Z})$.  Let $Z_\xi: [0,1] \times S^1 \to M$ be the cylinder swept out by the family $(K_s)$. We write $Z_\xi= Z_\xi^{(1)} \cup Z_\xi^{(2)}$, where $Z_\xi^{(i)}$ is the cylinder from step $(i)$ of the isotopy.
	
	Finally, we compute
	\begin{equation}
		\langle \operatorname{Flux}(K_s), \xi \rangle = \int_{Z_\xi} \omega = \int_{Z_\xi^{(1)}} \omega + \int_{Z_\xi^{(2)}} \omega = \int_{Z_\xi^{(1)} }\omega = \langle \alpha_K, \xi \rangle
	\end{equation}
	where $\int_{Z_\xi^{(2)}} \omega =0$ because $(K_s)_{s \in [1/2,2]}$ is a Hamiltonian isotopy.
	
	Since $Z_{\xi}$ has boundary on $L$ for all such $\xi$, it follows that $\langle [\alpha_K], \xi \rangle$ is contained in the image of the evaluation map $\omega: H_2(M, L; \mathbb{Z}) \to \mathbb{R}$, which is a countable set. By Lemma~\ref{lemma:elementary-linear}, it follows that only countably many classes $[\alpha_K] \in H^1(L;\R)$ can be represented by elements in $\mathcal{P}$. Recall however that if $[\alpha_K] = [\alpha_{K'}]$, then $K$ and $K'$ intersect. The conclusion follows.
\end{proof}

Recall that a submanifold $N \subset M$ is called \emph{instantaneously displaceable} if there is a Hamiltonian vector field of $M$ that is nowhere tangent to $N$. 
\begin{proposition} \label{prop:nonLag_subcritical}
	Let $N$ be a (closed) submanifold of $M^{2n}$ of dimension $k\leq n$. The following statements are equivalent.
	\begin{enumerate}[label=(\alph*)]
		\item $N$ admits an uncountable packing.
		\item $N$ is instantaneously displaceable.
		\item $N$ is not Lagrangian and its normal bundle admits a nowhere vanishing section.
	\end{enumerate}
\end{proposition}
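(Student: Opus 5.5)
We prove (b)$\Rightarrow$(a), (a)$\Rightarrow$(c) and (c)$\Rightarrow$(b); here ``packing'' means packing by Hamiltonian images in $\Ham(M)$, as in Proposition~\ref{proposition:intro}.

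\textbf{(b)$\Rightarrow$(a).} Let $X_H$ be a Hamiltonian vector field nowhere tangent to $N$, with $H$ cut off to be compactly supported near $N$. Its flow $\phi^t_H$ then satisfies $\phi^t_H(N)\cap N=\emptyset$ for all $0<|t|<\epsilon$. To see this, work in a tubular neighbourhood of $N$ and project $X_H|_N$ to the normal bundle, where it is a nowhere vanishing section. A compactness argument shows the trajectory of each point of $N$ leaves $N$ with uniformly positive normal speed, so for $t$ small, $\phi^t_H(N)$ is a normal graph over $N$ which is disjoint from $N$. Hence $\phi^s_H(N)\cap\phi^t_H(N)=\phi^s_H\bigl(N\cap\phi^{t-s}_H(N)\bigr)=\emptyset$ for distinct $s,t\in(0,\epsilon)$, which gives a packing of cardinality of the continuum.

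\textbf{(a)$\Rightarrow$(c).} If $N$ is Lagrangian, Proposition~\ref{proposition:intro} says every packing is countable, so $N$ is not Lagrangian. For the normal bundle, adapt the argument of Lemma~\ref{lemma:all-in-weinstein} to the space $\Sigma^\infty(N,M)$, which is second countable. This yields some $K=\phi(N)$ in the packing whose tubular neighbourhood contains uncountably many other members, each $C^1$-close to $K$ and hence a graph of a section of $\nu K$. Disjointness from $K$ makes such a section nowhere vanishing. Since $\nu K\cong\nu N$ via $d\phi$, the bundle $\nu N$ has a nowhere vanishing section.

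\textbf{(c)$\Rightarrow$(b).} This is the main obstacle: it is essentially the Laudenbach--Sikorav theorem \cite{laudenbach1994hamiltonian}, which states that a closed non-Lagrangian submanifold of dimension $\le n$ whose normal bundle has a nonvanishing section is infinitesimally displaceable, i.e.\ admits a Hamiltonian vector field nowhere tangent to it. I would quote this result. If a sketch is wanted, one seeks a function $H$ near $N$ with $dH$ nonvanishing on the $\omega$-orthogonal complement structure: the requirement is that $X_H(x)\notin T_xN$, equivalently $dH_x$ does not vanish on $(T_xN)^{\omega}$. The proof is an h-principle argument. First, a formal solution is built from the nonvanishing normal section and the non-Lagrangian condition, which gives room wherever $TN^\omega\not\subset TN$. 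Holonomic approximation then makes it genuine. Finally, one must treat the locus where $N$ is isotropic-like when $k=n$; this is the delicate step, and I would cite \cite{laudenbach1994hamiltonian} rather than redo it.
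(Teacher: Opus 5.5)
Your proof is correct and follows the paper's own route. The paper proves (b)$\Rightarrow$(a) by flowing $N$ for small distinct times, (a)$\Rightarrow$(c) via Proposition~\ref{proposition:intro} together with the second-countability/normal-graph argument of Lemma~\ref{lem:uncountable-gives-section}, and (c)$\Rightarrow$(b) by quoting \cite{laudenbach1994hamiltonian}. One correction to your citation: Laudenbach--Sikorav treat only the half-dimensional case $k=n$, while for $k<n$ the paper calls the statement folklore and points to \cite{gurel2008} for a proof of a more general result.
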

We first note: 
\begin{lemma} \label{lem:uncountable-gives-section}
	If a submanifold $N$ of $M$ admits an uncountable packing by smooth isotopies, then its normal bundle has a nowhere vanishing section.
\end{lemma}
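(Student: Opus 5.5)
Assume $\{N_\alpha\}$ is an uncountable family of pairwise disjoint submanifolds with $N_\alpha=\phi_\alpha(N)$ for smooth ambient isotopies $\phi_\alpha$. The idea is to run the second-countability argument of Lemma~\ref{lemma:all-in-weinstein} with a tubular neighbourhood in place of a Weinstein neighbourhood. Then two disjoint copies lying in one tubular neighbourhood, both $C^1$-close to the core, give a section of the normal bundle of the core. Since every $N_\alpha$ is diffeotopic to $N$, its normal bundle is isomorphic to $\nu N$, so a nowhere vanishing section of $\nu N_\alpha$ yields one of $\nu N$.

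First I would redo Lemma~\ref{lemma:all-in-weinstein} for arbitrary submanifolds. All $N_\alpha$ lie in $\Sigma^\infty(N,M)$, which is second countable. For each $K$ in the family I fix a tubular neighbourhood, that is, an embedding $\tau_K:\nu K\supset\mathcal{U}_K\to M$ extending the zero section. Let $\mathcal{O}(\mathcal{U}_K,\tau_K)$ be the set of submanifolds of the form $\tau_K(\operatorname{graph}s)$ with $s$ a section of $\nu K$ whose graph lies in $\mathcal{U}_K$. Submanifolds $C^1$-close to $K$ are such graphs, so these sets are open in $\Sigma^\infty(N,M)$, and together they cover the family. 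Taking a countable subcover, some $\mathcal{O}(\mathcal{U}_K,\tau_K)$ contains uncountably many members; in particular it contains some $K'\neq K$. I expect this step, openness of $\mathcal{O}$, to be the main technical point, since it is not literally stated earlier. It follows from the standard fact that an embedding $C^1$-close to $K$ is transverse to the fibres of the tubular projection, so that its image is a section. This is the same input used when the paper asserts that the Weinstein sets $\mathcal{O}_{Lag}$ are open.

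Now write $K'=\tau_K(\operatorname{graph}s)$. Because $K$ and $K'$ are disjoint and $\tau_K$ restricts to the identity on the zero section, $s$ vanishes nowhere. Hence $\nu K$ has a nowhere vanishing section, and since $\nu K\cong\nu N$ via the isotopy $\phi_K$, so does $\nu N$.

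In fact any two members of the family suffice for this last step once one lies in a tubular neighbourhood of the other as a graph. Uncountability, through second countability, is used only to guarantee that such a pair exists.
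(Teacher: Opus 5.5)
Your proposal is correct and follows essentially the same route as the paper: second countability of $\Sigma^\infty(N,M)$ forces some member of the packing to lie in a neighbourhood of another member in which every element is the graph of a section of the normal bundle, and disjointness makes that section nowhere vanishing. You also spell out two points the paper leaves implicit, namely why these graph neighbourhoods are open and how the ambient isotopy identifies $\nu K$ with $\nu N$.
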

\begin{proof}
	In the $C^\infty$ topology, every element of $\Sigma^\infty(N,M)$ admits a neighbourhood such that every element in it can be represented by the graph of a section of the normal bundle of $N$ in $M$. Let $\{N_\alpha\}$ be an uncountable packing of $N$. By second countability of $\Sigma^\infty(N,M)$, there is a countable subpacking $\{N_{\alpha_i}\}_{i=1}^\infty$ such that $\bigcup_\alpha N_\alpha$ is covered by neighbourhoods as above, centered at $N_{\alpha_i}$. But then, there is some $\alpha\notin\{\alpha_i\}$ such that $N_\alpha$ is in the neighbourhood associated to some $N_{\alpha_j}$. Therefore, there is a section $\nu$ of the normal bundle of $N_{\alpha_j}$ such that $N_\alpha$ is the graph of $\nu$. Since $N_\alpha\cap N_{\alpha_j}=\emptyset$, $\nu$ does not vanish at any point.
\end{proof}

\begin{proof}[Proof of Proposition~\ref{prop:nonLag_subcritical}]
	We first show that \emph{(a)} implies \emph{(c)}. Assume that \textit{(a)} holds. By Lemma~\ref{lem:uncountable-gives-section}, $N$ has a nowhere vanishing normal vector field. Moreover, $N$ cannot be Lagrangian by Proposition~\ref{proposition:intro}.
	
	We now deduce \emph{(b)} from \emph{(c)}. Assume that \textit{(c)} holds. If $k=n$, then $N$ is instantaneously displaceable by the main result of Laudenbach--Sikorav \cite{laudenbach1994hamiltonian}. If $k<n$, this is folklore 
	(see also~\cite{gurel2008} for a formal proof of a more general result).
	
	Finally, \textit{(b)} clearly implies \textit{(a)}: if $\{\phi^t\}$ is the flow of the Hamiltonian vector field that is nowhere tangent to $N$, then $\phi^t(N)\cap\phi^s(N)=\emptyset$ for all $s,t$ small enough.
\end{proof}

\subsection{Proof of Theorem~\ref{theoreom:main-co}}
We deduce Theorem~\ref{theoreom:main-co} by combining Proposition~\ref{prop:nonLag_subcritical} with the following:
\begin{proposition}\label{proposition:lagrangian-countable_C0}
	Let $M= (M, \omega)$ be a closed symplectic manifold. Let $L \subset M$ be a Lagrangian submanifold with the property that 
	\begin{equation}\label{equation:top-assumption-body} \langle\Gamma_{top},\iota_*(H_1(L))\rangle\cap \omega(\pi_2(M)) \subset \mathbb{R}\end{equation} is discrete. The packing cardinality of $L$ is at most countable.
\end{proposition}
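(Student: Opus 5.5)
The plan is to run the flux argument from the second proof of Proposition~\ref{proposition:intro}, replacing the Hamiltonian isotopy from $K$ back to $L$ by a $C^0$ limit. Suppose $\mathcal{P}=\{\phi_\alpha(L)\}$ is an uncountable $C^0$ packing with $\phi_\alpha\in\ov{\Ham}(M)$ and each $\phi_\alpha(L)$ a smooth submanifold. The first thing to check is that each $\phi_\alpha(L)$ is Lagrangian. I would quote $C^0$-rigidity of Lagrangians (coisotropic rigidity in the sense of Humili\`ere--Leclercq--Seyfaddini): a smooth image of a Lagrangian under a Hamiltonian homeomorphism is again Lagrangian. Then Lemma~\ref{lemma:all-in-weinstein} applies. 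After replacing $L$ by a suitable $K_0\in\mathcal{P}$, which is still $\ov{\Ham}$-equivalent to $L$, we may assume uncountably many elements of $\mathcal{P}$ are graphs $K=\tilde\iota(\op{graph}\alpha_K)$ of closed $1$-forms in one Weinstein neighbourhood of $L$. As before, $[\alpha_K]=[\alpha_{K'}]$ forces $K\cap K'\neq\emptyset$. By Lemma~\ref{lemma:elementary-linear}, it therefore suffices to show that for each $\xi\in H_1(L;\mathbb{Z})$ the values $\langle[\alpha_K],\xi\rangle$ lie in a fixed countable set.

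\textbf{Constructing a loop in $C^0(L,M)$.} Fix $K=\phi(L)$ and choose $\phi_i\in\Ham(M)$ with $\phi_i\to\phi$ in $C^0$. For $i$ large, $\phi_i|_L$ and $\phi|_L$ are joined by the short geodesic homotopy $h_i$. Concatenate three paths:
\begin{enumerate}
\item a Hamiltonian isotopy from $L$ to $\phi_i(L)$, which contributes zero flux;
\item the homotopy $h_i$ from $\phi_i|_L$ to $\phi|_L$, which contributes some amount $c_i(\xi)$;
\item the linear path back through $\op{graph}(s\alpha_K)$, composed with $\phi^{-1}|_K$ reparametrised to the inclusion, which contributes $-\langle[\alpha_K],\xi\rangle$.
\end{enumerate}
In step (3) one must match parametrisations: the difference between $\phi|_L$ and the graph parametrisation is a self-map of $L$ isotopic to a diffeomorphism. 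If this is not isotopic to the identity, I would pass to a finite-index issue via the mapping class action, or absorb it into the loop. The concatenation is a loop in $C^0(L,M)$, so
\[
\langle[\alpha_K],\xi\rangle \in c_i(\xi)+\langle\Gamma_{top},\iota_*\xi\rangle .
\]

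\textbf{Controlling $c_i(\xi)$ (main obstacle).} I expect this step to be the hardest, because $C^0$-closeness gives no a priori control on the area of the thin cylinder $h_i(\xi)$. Comparing two indices $i,j$, the loop formed by $h_i$, the Hamiltonian isotopy $\phi_j\phi_i^{-1}$, and $h_j$ reversed shows that
\[
c_i(\xi)-c_j(\xi)\in\langle\Gamma_{top},\iota_*\xi\rangle .
\]
On the other hand, both $\phi_i(\xi)$ and $\phi_j(\xi)$ lie in an exact Weinstein neighbourhood of $K$ where $\omega=d\lambda$. So the difference is also a $\lambda$-action difference of loops on the Lagrangians $\phi_i(L),\phi_j(L)$, up to elements of $\omega(\pi_2(M))$ coming from capping in the neighbourhood. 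This places $c_i-c_j$ in $\langle\Gamma_{top},\iota_*H_1(L)\rangle\cap\omega(\pi_2(M))$ plus a term I would try to make tend to $0$, using the a-c-l-s-style estimate that Lagrangian action/flux is $C^0$-continuous within a fixed Weinstein neighbourhood. Since by \eqref{equation:top-assumption-body} this intersection is discrete, the differences stabilise, and I would aim to show that $c_i(\xi)$ converges modulo a countable group. Granting this, $\langle[\alpha_K],\xi\rangle$ lies in the countable set $\langle\Gamma_{top},\iota_*H_1(L)\rangle+\omega(\pi_2(M))+\omega(H_2(M,L))$, independent of $K$. Lemma~\ref{lemma:elementary-linear} then shows only countably many classes $[\alpha_K]$ occur, contradicting uncountability.
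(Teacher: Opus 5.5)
\textbf{Verdict: genuine gap.} Your scheme follows the paper's. You use a Hamiltonian piece of zero area, a thin homotopy $h_i$ from $\phi_i$ to $\phi$, and the linear path through $\operatorname{graph}(s\alpha_K)$. You also compare two indices to get $c_i(\xi)-c_j(\xi)\in\langle\Gamma_{top},\iota_*\xi\rangle$, which is Lemma~\ref{lemma:h1-discrete}.

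The gap is the step meant to put $c_i(\xi)-c_j(\xi)$ into $\omega(\pi_2(M))$. Without it, \eqref{equation:top-assumption-body} cannot be used, since $\langle\Gamma_{top},\iota_*\xi\rangle$ alone need not be discrete. Your Weinstein-neighbourhood argument does not produce this membership. The cylinder $h_i(\xi)\#\overline{h_j(\xi)}$ lies in the exact neighbourhood $\mathcal{V}$ of $K$, so by Stokes its area is \emph{exactly} $\int_{\phi_j(\xi)}\lambda-\int_{\phi_i(\xi)}\lambda$, up to sign. Nothing needs capping and no $\omega(\pi_2(M))$ term appears; you just get a real number with no arithmetic constraint.

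A statement of the form ``element of the discrete group plus a term tending to $0$'' does not help either. You need $c_i(\xi)$ to be \emph{eventually constant}, so that the cycle areas $\langle[\alpha_K],\xi\rangle-c_i(\xi)$ stabilise. That requires the differences to lie \emph{exactly} in a discrete subgroup. Your ``granting this'' therefore grants the heart of the proof.

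The missing idea is Lemma~\ref{lemma:omega-discrete}: extract the $\pi_2$-information from the \emph{ambient} loop of maps. This loop is formed by $h_i$, $\overline{h_j}$, and a Hamiltonian path $t\mapsto\chi^t\circ\phi_j$ from $\phi_j$ to $\phi_i$, where $\chi^1=\phi_i\phi_j^{-1}$.
\begin{itemize}
\item The torus traced by $\xi$ under this loop has area $c_i(\xi)-c_j(\xi)$ up to sign.
\item It is spherical as soon as the orbit of one point of $\xi$ is contractible: homotope the torus to be constant on that circle, and it then factors through $S^2\vee S^1$.
\item Since $M$ is connected, orbits of different points are freely homotopic, so one contractible orbit anywhere suffices.
\item Holomorphic curves enter here. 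By Floer theory on closed $M$, $\chi^1$ has a fixed point $z$ with $t\mapsto\chi^t(z)$ contractible.
\item At $x=\phi_j^{-1}(z)$, the rest of the orbit is the loop $h_i(x)\#\overline{h_j(x)}$ based at $z$. It lies in a small geodesic ball around $\phi(x)$, so the whole orbit is contractible.
\end{itemize}
Hence $c_i(\xi)-c_j(\xi)\in\omega(\pi_2(M))$.

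You also need to actually prove $c_i(\xi)\to 0$. The precise input is the Membrez--Opshtein estimate \cite[Thm.~2]{m-o}, as used in Lemma~\ref{lemma:limit-zk}. With both ingredients, $c_i(\xi)=0$ for large $i$. Then $\langle[\alpha_K],\xi\rangle$ is the area of a closed cycle and lies in the countable set $\omega(H_2(M;\mathbb{Z}))$.

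A minor point: your first loop lives only in $C^0(L,M)$, so its area is not a priori in $\langle\Gamma_{top},\iota_*\xi\rangle$, which refers to loops in $C^0(M,M)$. This is harmless, since you only need that area to lie in a countable set.
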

We will need the following lemma, whose proof is an elementary exercise using the exponential map.
\begin{lemma}\label{lemma:injectivity}
	Let $(M, g)$ be a Riemannian manifold with injectivity radius $r_0>0$. Suppose that $f_0, f_1: M \to M$ are compactly supported smooth maps with the property that $dist(f_0(x),f_1(x)) <\epsilon< r_0$ for all $x$. Then there exists a smooth homotopy $\{h^t:  M \to M\}_{t \in [0,1]}, h^0 = f_0, h^1=f_1$, with the property that $s\mapsto dist(h^s(x), h^0(x))$ is non-decreasing for all $x \in M$.
	\qed
\end{lemma}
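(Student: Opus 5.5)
The plan is to take for $h^s$ the straight-line homotopy along minimizing geodesics, using the exponential map of $g$. For $p \in M$ and $v \in T_pM$ with $|v|<r_0$, the point $\exp_p(v)$ is defined. Since $r_0$ is the injectivity radius, the geodesic $s \mapsto \exp_p(sv)$, $s\in[0,1]$, is the unique minimizing geodesic from $p$ to $\exp_p(v)$. Consequently $dist(p,\exp_p(sv)) = s|v|$ for all $s \in [0,1]$.

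First, define the open set $\mathcal{D} = \{(p,q) \in M\times M : dist(p,q) < r_0\}$. The map $E: \{(p,v) \in TM : |v|<r_0\} \to \mathcal{D}$, $(p,v)\mapsto (p,\exp_p v)$, is a diffeomorphism, because $\exp_p$ is a diffeomorphism from the open $r_0$-ball in $T_pM$ onto the metric ball $B(p,r_0)$, uniformly in $p$. Let $\Phi=E^{-1}$, which is smooth.

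Set $v(x) := \Phi(f_0(x),f_1(x)) \in T_{f_0(x)}M$. This is well defined since $dist(f_0(x),f_1(x))<\epsilon<r_0$, and it is smooth in $x$ as a composition of smooth maps. It satisfies $|v(x)| = dist(f_0(x),f_1(x))$. Now define
\[
h^s(x) := \exp_{f_0(x)}\bigl(s\,v(x)\bigr), \qquad s\in[0,1].
\]
This is smooth in $(s,x)$, and $h^0=f_0$, $h^1=f_1$. By the minimizing property recalled above, $dist(h^s(x),h^0(x)) = s\,|v(x)|$, which is non-decreasing in $s$, as required.

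It remains to check compact support. Outside the union of the supports of $f_0$ and $f_1$ we have $f_0(x)=f_1(x)=x$, so $v(x)=0$ and $h^s(x)=x$ for all $s$. Thus each $h^s$ is compactly supported, with support contained in a fixed compact set.

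The only step requiring care, and the main obstacle, is the smoothness of $\Phi$ on $\mathcal{D}$: that is, that $(p,q)\mapsto \exp_p^{-1}(q)$ is smooth wherever $dist(p,q)<r_0$. I would argue this via the inverse function theorem applied to $E$. $E$ is a local diffeomorphism at every point of its domain, because there are no conjugate points along geodesics of length $<r_0$. $E$ is also injective and surjective onto $\mathcal{D}$ by the definition of the injectivity radius. A bijective local diffeomorphism is a diffeomorphism, so $\Phi$ is smooth.
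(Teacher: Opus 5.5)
Your proof is correct and is exactly the geodesic interpolation $h^s(x)=\exp_{f_0(x)}\bigl(s\,\exp_{f_0(x)}^{-1}f_1(x)\bigr)$ that the paper has in mind when it calls the lemma ``an elementary exercise using the exponential map.'' The invertibility of $d(\exp_p)_v$, which you need for smoothness of $\Phi$, already follows from $\exp_p$ being a diffeomorphism on the $r_0$-ball, so the conjugate-point remark is not needed.
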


We also need the following pieces of notation:
\begin{notation}
	If $\{\Psi^t: N \to M\}_{t \in [0,a_i]}$ is a continuous family of continuous maps, we write $\overline{\Psi}^t:= \Psi^{1-t}$.
	
	If $\{\Phi_i^t: N \to M\}_{t \in [0,a_i]}$ is a continuous family of continuous maps with $\Phi_1^{a_1} = \Phi_2^0$, we let $\{ (\Phi_1 \# \Phi_2)^s\}_{s \in [0,a_1+a_2]}$ be the ``concatenation from left to right'' of these families. In other words, $(\Phi_1 \# \Phi_2)^s(-) = \Phi_1^s(-)$ for $s \in [0,a_1]$ and  $(\Phi_1 \# \Phi_2)^s(-) = \Phi_2^{1-s}(-)$ for $s \in [a_1,a_1+a_2]$. 
\end{notation}
	
We now begin the proof of Proposition~\ref{proposition:lagrangian-countable_C0}. Arguing as in the second proof of Proposition~\ref{proposition:intro}, let us suppose again for contradiction that $\mathcal{P}$ is an uncountable packing. We may again assume without loss of generality that $\mathcal{P}$ is entirely contained in $\mathcal{O}_{Lag}(\mathcal{U}, \tilde{\iota})$ for some Weinstein neighborhood $(\mathcal{U}, \iota)$ of $L$. For each $K \in \mathcal{P}$, there is a $1$-form $\alpha_K \in \Omega^1(L)$ so that $K = \tilde{\iota}(\operatorname{graph}(\alpha_K))$. It is enough to prove that the set of $1$-forms $\alpha_K$ representing elements of $\mathcal{P}$ is countable.

Fix an auxiliary compatible complex structure on $M$; henceforth we will always measure distances with respect to the induced metric. For $r \geq 1$, let $\mathcal{V}$ be a Weinstein neighborhood of $K$ of radius $1/r$. By choosing $r$ large enough, we can assume $\mathcal{V} \subset \mathcal{U}$. 

Let $\phi$ be a compactly-supported Hamiltonian homeomorphism of $M$ such that $\phi(L)=K$, and let $\{\phi_k\}$ be a sequence of compactly-supported Hamiltonian diffeomorphisms that $C^0$-converges to $\phi$. 
Without loss of generality, we may assume that $\phi_k(L) \subset \mathcal{V}$. Finally, we fix for each $k$ a compactly-supported Hamiltonian isotopy $\{\phi^t_k\}_{t\in [0,1]}$ from the identity to $\phi^1_k=\phi_k$.

By Lemma~\ref{lemma:injectivity}, we may assume after possibly forgetting finitely many terms in the sequence $\{\phi_k\}$ that there exists a homotopy $$\{f_k^t: M \to M\}_{t \in [0,1]}$$ such that $f_k^0=\phi_k, f_k^1= \phi$. We can further assume that $f_k^t(L) \subset \mathcal{V}$ for all $t \in [0,1]$ and that $\operatorname{dist}(f_k^{t_0}(x), f_k^{t_1}(x))< 1/r$ for all $t_1, t_2 \in [0,1]$.

\subsubsection{Construction}
	Fix a loop $\xi: S^1 \to L$. We construct a closed cycle $Z_{k, \xi}$ by concatenating three cylinders,\footnote{The terms loop/cylinder always refer to \emph{maps} with domain $S^1\times S^1$ or $[0,1]\times S^1$, which need not be embeddings. The positive (resp.\ negative) boundary of a cylinder $[0,1] \times S^1 \to M$ is understood to be the restriction of the map to $\{1\} \times S^1$ (resp.\ $\{0\} \times S^1$.} as follows:
	
	\begin{enumerate}
		\item Let $C_\xi$ be the cylinder swept by $\xi$ through the Lagrangian isotopy $t\mapsto \tilde{\iota}(\operatorname{graph}(t\alpha_K))$. Let $\ell_C^+$ resp.\ $\ell_C^-$ be the positive resp.\ negative boundary components; by construction, we have $\ell_C^-= \xi$.
		\item Let $D_{k, \xi}$ be the cylinder swept by $\ell_C^+$ through the homotopy $\{\ov{f}^t_k\}$. By construction, $D_{k, \xi}$ is contained in $\mathcal{V}$.  We let $\ell_{D,k}^\pm$ be the positive/negative boundary components; by construction $\ell_{D,k}^-= \ell_{C,k}^+$.
		
		\item Let $E_{k, \xi}$ be the cylinder swept by $\ell_{D,k}^+$ by the Hamiltonian isotopy $\{\ov{\phi}_k\}$. We let $\ell_{E,k}^\pm$ be the positive/negative boundary components; by construction $\ell_{E,k}^-= \ell_{D,k}^+$ and $\ell_{E, k}^+= \phi_k^{-1} (\phi_k(\xi))= \xi$.
	\end{enumerate}
	We let $Z_{k, \xi} ``:=" C_\xi \# D_{\xi, k}\# E_{\xi, k}$ be the cyclic concatenation of $C_\xi, D_{\xi, k}, E_{\xi, k}$ along their common boundary components $\ell_C^+= \ell^-_{D, k}, \ell^+_{D, k}= \ell^-_{E, k}, \ell^+_{E, k}= \xi = \ell_C^-$.

\begin{lemma}\label{lemma:limit-zk}
	We have $\langle [\alpha_K], \xi \rangle=\lim_k \omega(Z_{k,\xi})$. 
\end{lemma}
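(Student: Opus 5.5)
Since $Z_{k,\xi}$ is the cyclic concatenation $C_\xi\# D_{k,\xi}\# E_{k,\xi}$, additivity of $\omega$ on the pieces gives
\[
\omega(Z_{k,\xi})=\int_{C_\xi}\omega+\int_{D_{k,\xi}}\omega+\int_{E_{k,\xi}}\omega ,
\]
and I will evaluate the three terms separately.

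\emph{The term $C_\xi$.} As in the second proof of Proposition~\ref{proposition:intro}, the linear isotopy $t\mapsto\tilde\iota(\operatorname{graph}(t\alpha_K))$ sits inside the Weinstein neighbourhood. Pulling back by $\tilde\iota$ turns $\int_{C_\xi}\omega$ into the integral of the canonical form $d\lambda$ on $T^*L$ over the cylinder swept by $\xi$. By Stokes this equals $\int_\xi\alpha_K-\int_\xi 0=\langle[\alpha_K],\xi\rangle$, up to the sign fixed by the orientation conventions. This term does not depend on $k$.

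\emph{The term $E_{k,\xi}$.} This cylinder is the trace of the loop $\ell^+_{D,k}=\phi_k(\xi)$, which lies in the Lagrangian $\phi_k(L)$, under the Hamiltonian isotopy $\overline{\phi}_k$ carrying $\phi_k(L)$ back to $L$. Writing $\phi_k^t$ as the flow of $H^t_k$, the integral of $\omega$ over the trace of a loop under a Hamiltonian flow is $\int_0^1\int_{\xi_t}dH^t_k\,dt=0$, since each slice is a closed loop. Hence $\int_{E_{k,\xi}}\omega=0$. This is the Lagrangian-flux-vanishing fact recalled in Subsection~\ref{subsection:lag-flux}.

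\emph{The term $D_{k,\xi}$.} This is the main obstacle. $D_{k,\xi}$ is the trace of the fixed loop $\ell_C^+=\tilde\iota(\operatorname{graph}\alpha_K|_\xi)\subset K$ under the homotopy $\overline{f}_k^t$, which runs from $\phi$ to $\phi_k$. Its boundaries are $\ell_C^+$ and $\phi_k(\xi)$, both inside $\mathcal{V}$. The whole cylinder stays in $\mathcal{V}$, and its tracks have length less than $1/r$. Because $\phi_k\to\phi$ in $C^0$, and Lemma~\ref{lemma:injectivity} is applied via the exponential map, these tracks have length at most $\sup_x\operatorname{dist}(\phi_k(x),\phi(x))\to 0$.

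I do not expect the area of $D_{k,\xi}$ itself to tend to zero, since $\phi_k(\xi)$ may be very long and wiggly. Instead I will use that $\mathcal{V}$ retracts onto $K$ and that $\omega|_{\mathcal{V}}=d\lambda$ is exact, with $\lambda$ the Liouville form transported so that it vanishes on the zero section $K$. Stokes then gives $\int_{D_{k,\xi}}\omega=\int_{\phi_k(\xi)}\lambda-\int_{\ell_C^+}\lambda=\int_{\phi_k(\xi)}\lambda$, so it suffices to show $\int_{\phi_k(\xi)}\lambda\to 0$.

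Since $\phi_k(\xi)$ lies in the Lagrangian $\phi_k(L)$, the restriction of $\lambda$ to it is closed. This is the delicate point: $\phi_k(L)$ need not be a graph over $K$. My plan is to compare with the $C^0$-small homotopy to $\phi(\xi)\subset K$ and argue that the integral is determined up to an error controlled by the $C^0$ distance. If that $C^0$-control fails in general, I will settle for a weaker statement: the limit exists modulo periods lying in $\langle\Gamma_{top},\iota_*(H_1(L))\rangle\cap\omega(\pi_2(M))$. That is the quantity the discreteness hypothesis is designed to handle, and combining the three terms would then give $\langle[\alpha_K],\xi\rangle=\lim_k\omega(Z_{k,\xi})$.
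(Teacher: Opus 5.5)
Your computations for $C_\xi$ and $E_{k,\xi}$ are fine. The genuine gap is the $D_{k,\xi}$ term, which is the whole substance of the lemma. Your Stokes identity $\omega(D_{k,\xi})=\pm\int_{\phi_k(\xi)}\lambda$ is correct, using $\lambda|_K=0$ and $D_{k,\xi}\subset\mathcal{V}$. But it is only a restatement: proving $\int_{\phi_k(\xi)}\lambda\to 0$ \emph{is} proving that the symplectic area $\omega(D_{k,\xi})$ tends to $0$, and you give no argument for it.

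The soft comparison you propose cannot work. A homotopy of $C^0$-size $\delta$ bounds the symplectic area only by roughly $\delta$ times the length of the loops, and the length of $\phi_k(\xi)$ is uncontrolled. Concretely, in $T^*S^1$ the loop $\theta\mapsto(\theta+\delta\cos N\theta,\ \delta\sin N\theta)$ has the following properties:
\begin{itemize}
\item it is immersed, and automatically Lagrangian;
\item it stays in $\{|p|\le\delta\}$ and is homotopic to the zero section;
\item it has $|\int\lambda|=\pi N\delta^2$, which is unbounded in $N$.
\end{itemize}
So any proof must use, in an essential and non-soft way, that $\phi_k(\xi)$ lies on a closed \emph{embedded} Lagrangian inside a thin neighbourhood of $K$.

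The paper supplies exactly this through Membrez--Opshtein \cite[Thm.~2]{m-o}, a $J$-holomorphic-curve result. It says that a closed Lagrangian contained in the radius-$\epsilon$ Weinstein neighbourhood of $K$ has Liouville class bounded on $\gamma$ by $\epsilon\,\ell^{\min}_g(\pi_*\gamma)$. Projecting $D_{k,\xi}\subset\mathcal{V}$ to $K$ shows that $\pi_*[\phi_k(\xi)]=[\ell_C^+]$ is independent of $k$. Meanwhile $\epsilon\to 0$ because $\phi_k\to\phi$ uniformly and $\phi(L)=K$. Hence $\omega(D_{k,\xi})\to 0$, and inserting this input into your Stokes formula would close the gap.

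Your fallback does not rescue the lemma. The discreteness hypothesis enters only through Lemmas~\ref{lemma:h1-discrete} and~\ref{lemma:omega-discrete}, to show that $\omega(Z_{k,\xi})$ is eventually constant in $k$. It says nothing about whether that constant equals $\langle[\alpha_K],\xi\rangle$, because the discrepancy is precisely $\omega(D_{k,\xi})$, a priori an arbitrary real number. Without $\omega(D_{k,\xi})\to 0$, the countability argument in Proposition~\ref{proposition:lagrangian-countable_C0} breaks down.
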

\begin{proof}
	Note that
	\begin{align} \label{eq:cylinders_flux}
		\langle [\alpha_K], \xi \rangle=\omega(C_\xi)=\omega(Z_{k,\xi})-\omega(D_{k,\xi})-\omega(E_{k,\xi}).
	\end{align}
	But note that $E_{k,\xi}$ is swept out by a Hamiltonian isotopy and thus $\omega(E_{k,\xi}) = 0$.
	Finally, by \cite[Thm.\ 2]{m-o}, we have that
	\begin{align*}
		|\omega(D_{k,\xi})|\leq r\ell^{\min}_g(\pi_*\ell^+_{D,k})=r\ell^{\min}_g(\ell^-_{D,k}),
	\end{align*}
	where $\ell^{\min}_g(\beta)$ is the minimal length of a geodesic loop in $(K,g)$ representing $\beta$ and $\pi:\phi_k(L)\to K$ is the projection induced by the inclusion of $\phi_k(L)$ into the Weinstein neighbourhood $\mathcal{V}$ of $K$. But, as $k$ tends to infinity, we may take $r$ above tending to zero, so that $\omega(D_{k,\xi})\to 0$. Therefore, $\langle [\alpha_K], \xi \rangle=\lim \omega(Z_{k,\xi})$. 
\end{proof}

\begin{lemma}\label{lemma:h1-discrete}
	We have $$\omega(Z_{k,\xi})-\omega(Z_{\ell,\xi}) \in \langle\Gamma_{top},H_1(L)\rangle.$$ 
\end{lemma}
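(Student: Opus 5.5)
Since $C_\xi$ does not depend on $k$, the plan is to compare $Z_{k,\xi}$ and $Z_{\ell,\xi}$ by gluing them along $C_\xi$. The difference $\omega(Z_{k,\xi})-\omega(Z_{\ell,\xi})$ then equals $\omega$ evaluated on the cylinder $D_{k,\xi}\# E_{k,\xi}$ followed by the reverse of $D_{\ell,\xi}\# E_{\ell,\xi}$. This glued object starts at $\ell_C^+$, goes back to $\xi$ via the $k$-data, and returns to $\ell_C^+$ via the reversed $\ell$-data. Both halves have the same boundary loops $\ell_C^+$ and $\xi$, so the glued object is a torus $S^1\times S^1\to M$. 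The goal is to recognize this torus as the trace of $\xi$ under a loop of continuous maps $L\to M$. Its $\omega$-value is then $\langle \operatorname{Flux}_{top}(\gamma),[\xi]\rangle$ for some $\gamma\in\pi_1(C^0(L,M))$, which lies in $\langle\Gamma_{top},H_1(L)\rangle$ with $N=L$.

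The key step is to define that loop $\gamma$ of maps $L\to M$, based at $\phi|_L$ (or at $\iota$), by concatenation:
\begin{itemize}
\item first $\{\ov f_k^t|_L\}$ from $\phi|_L$ to $\phi_k|_L$;
\item then $\{\ov\phi_k^t\circ\phi_k|_L\}$, going from $\phi_k|_L$ back to $\iota$;
\item then the reverse of the corresponding $\ell$-path, namely $\{\phi_\ell^t|_L\}$ from $\iota$ to $\phi_\ell|_L$;
\item finally $\{f_\ell^t|_L\}$ from $\phi_\ell|_L$ back to $\phi|_L$.
\end{itemize}
This is a closed loop in $C^0(L,M)$, continuous because $\phi$ is a homeomorphism and all families are continuous in $t$.

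One then has to check that evaluating this loop on the appropriate loop of $L$ reproduces the torus above. Here I would be careful about parametrizations. The cylinder $D_{k,\xi}$ is the trace of $\ell_C^+$, which is a loop in $K=\phi(L)$, so it should be viewed as the trace of the loop $\phi^{-1}\circ\ell_C^+$ in $L$ under the maps $\ov f_k^t$. The cylinder $E_{k,\xi}$ is the trace under $\ov\phi_k$ of $\ell_{D,k}^+=\phi_k(\phi^{-1}\ell_C^+)$. For the endpoint to be $\xi$, as the paper asserts, one needs $\phi^{-1}\ell_C^+$ to be homotopic to $\xi$ in $L$. I would handle this by replacing the loop $\phi^{-1}\circ\ell_C^+$ with $\eta:=\phi^{-1}\circ\ell_C^+$ throughout. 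The torus is then the trace of $\eta$ under $\gamma$, and the two ends match ($\xi$ versus $\eta$) up to the same cylinder in both $Z_k$ and $Z_\ell$, so these contributions cancel in the difference. Hence the difference is exactly $\int_{\gamma\cdot\eta}\omega$ with $[\eta]\in H_1(L)$, as required.

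The main obstacle I expect is this bookkeeping. One must make sure the $D$ and $E$ pieces for $k$ and $\ell$ are traces of one and the same loop of $L$. Otherwise a cylinder lying in $L$ or $K$ appears, and its area has to be shown to vanish. This is the case here: such a cylinder lies in a single Lagrangian, so $\omega$ restricts to zero on it and its area is $0$. One must also check that the orientation and sign conventions make the difference a single flux pairing rather than a sum with stray terms. Smoothness is not an issue, since $\omega$ integrates over continuous cycles homologically.
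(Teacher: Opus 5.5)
Your proposal is correct and is essentially the paper's proof. After cancelling the common cylinder $C_\xi$, the paper identifies the torus $\ov{E_{\ell,\xi}}\#\ov{D_{\ell,\xi}}\#D_{k,\xi}\#E_{k,\xi}$ as the trace of a loop under the ambient family $\phi_\ell\#f_\ell\#\ov{f}_k\#\ov{\phi}_k$ in $C^0(M,M)$, which is your $\gamma$ with the base point moved from $\phi$ to the identity; using this ambient loop ($N=M$) rather than restricting to $L$ is what matches the downstream hypothesis, which is phrased with $\iota_*(H_1(L))$.

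Your replacement of $\xi$ by the $k$-independent loop $\eta=\phi^{-1}\circ\ell_C^+$ is more careful than the paper, which tacitly takes $\eta=\xi$. Since $\eta$ need not even be homologous to $\xi$ in $L$ (e.g.\ if $\phi$ preserves $L$ but acts nontrivially on $H_1(L)$), it is cleanest to regard $Z_{k,\xi}$ as a class in $H_2(M,L)$ with boundary $\eta-\xi$ rather than closing it with a cylinder; this changes nothing in the difference.
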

\begin{proof} Note that
	\begin{equation*}
		\omega(Z_{k,\xi})-\omega(Z_{\ell,\xi})=\omega(\overline{Z_{\ell,\xi}}\#Z_{k,\xi})=\omega(\overline{E_{\ell,\xi}}\#\overline{D_{\ell,\xi}}\#D_{k,\xi}\#E_{k,\xi}),
	\end{equation*}
	where $\#$ denotes concatenation and $\overline{\cdot}$ the reversal of orientation. By construction, $\overline{E_{\ell,\xi}}\#\overline{D_{\ell,\xi}}\#D_{k,\xi}\#E_{k,\xi}$ is the torus swept out by $\xi$ under the concatenation of smooth maps 
	$\{\phi_\ell\#f_\ell\# \ov{f}_k\# \ov{\phi}_k\}$.
\end{proof}

\begin{lemma}\label{lemma:omega-discrete}
	We have $$\omega(Z_{k,\xi})-\omega(Z_{\ell,\xi}) \in \omega(\pi_2(M)).$$ 
\end{lemma}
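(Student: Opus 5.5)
We need to show that $\omega(Z_{k,\xi})-\omega(Z_{\ell,\xi})$ lies in $\omega(\pi_2(M))$. The idea is to use the $C^0$-smallness of the homotopies $f_k$, $f_\ell$ together with the fact that $\phi_k$ and $\phi_\ell$ are $C^0$-close. Together these should let us write the torus from Lemma~\ref{lemma:h1-discrete} as a cylinder with $\omega$-area zero, up to a sphere.

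\textbf{Step 1: pass to a torus with a Hamiltonian part and a small part.} By the proof of Lemma~\ref{lemma:h1-discrete}, the difference equals $\omega(T)$, where $T$ is the torus swept out by $\xi$ under the loop of maps
\[
\phi_\ell\# f_\ell\#\ov{f}_k\#\ov{\phi}_k .
\]
Write $\Phi$ for the concatenation $\phi_\ell\#\ov{\phi}_k$, joined at the end by the loop $f_\ell\#\ov f_k$. The latter runs from $\phi_\ell$ to $\phi$ and back to $\phi_k$.

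\textbf{Step 2: replace the $f$-part by a short straight homotopy.} Since $\operatorname{dist}(\phi_\ell(x),\phi_k(x))<2/r$, which is below the injectivity radius, Lemma~\ref{lemma:injectivity} gives a geodesic homotopy $g^t$ from $\phi_\ell$ to $\phi_k$. Both $f_\ell\#\ov f_k$ and $g$ are homotopies from $\phi_\ell$ to $\phi_k$ through maps pointwise within distance $<3/r$ of $\phi$. So for each $x$ the two paths $t\mapsto (f_\ell\#\ov f_k)^t(x)$ and $t\mapsto g^t(x)$ lie in the geodesically convex ball of radius $3/r$ about $\phi(x)$. The straight-line (exponential-chart) homotopy between them, taken rel endpoints, is therefore canonical and depends continuously on $x$.

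Restricting this homotopy to $\xi$ gives a three-dimensional chain whose boundary is the difference of the two cylinders. Hence the two cylinders have equal $\omega$-area. In other words, $\omega(T)=\omega(T')$, where $T'$ is swept by $\xi$ under $\phi_\ell\# g\#\ov\phi_k$.

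\textbf{Step 3: identify $T'$ with a sphere.} Now $\phi_\ell\#g\#\ov\phi_k$ is a loop based at the identity. Composing with $\phi_k^{-1}$ gives a homotopy $\phi_k^{-1}\circ(\phi_\ell\# g)$ from $\phi_k^{-1}\phi_\ell$ to $\mathrm{id}$. The first leg $\phi_k^{-1}\circ\phi_\ell^t$ is Hamiltonian, so on loops it contributes zero $\omega$-area.

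The second leg $\phi_k^{-1}\circ g^t$ is not small in $C^0$, because $\phi_k^{-1}$ need not be equicontinuous. This is the step I expect to be the main obstacle.

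To handle it, I would avoid composing with $\phi_k^{-1}$. Instead I would directly decompose $T'$ as the Hamiltonian cylinder $E_\ell$ (area zero), the reversed cylinder $\ov{E_k}$ (area zero), and the thin cylinder $G$ swept by $g$ along $\phi_\ell(\xi)$. This gives $\omega(T')=\omega(G)$ modulo spheres. The cylinder $G$ connects $\phi_\ell(\xi)$ to $\phi_k(\xi)$ inside $\mathcal V$.

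Both $\phi_\ell(\xi)$ and $\phi_k(\xi)$ are loops on exact-isotopic Lagrangians. In the Weinstein neighbourhood $\mathcal V\simeq$ a neighbourhood of the zero section of $T^*K$, both loops project to loops in $K$ freely homotopic to $\pi(\phi(\xi))$. So $G$ together with the projections bounds a region that retracts onto $K$.

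Applying the estimate of \cite[Thm.\ 2]{m-o} to $G$, exactly as in Lemma~\ref{lemma:limit-zk}, then gives $|\omega(G)|\le C/r$. Any other choice of connecting cylinder differs from $G$ by a torus in $\Gamma_{top}$ or by a sphere. The real content of the argument is therefore to show that the $C^0$-thinness of $G$ forces its class to be unambiguous up to $\pi_2(M)$. This holds because a thin tube admits a canonical filling along the geodesic retraction, and any two such cylinders glue into a map $S^1\times S^1\to M$ whose $\{pt\}\times S^1$ direction is nullhomotopic. Such a map factors, up to homotopy, through a sphere.
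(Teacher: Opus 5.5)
\textbf{There is a genuine gap, in Step 3.} Steps 1 and 2 are fine; Step 2 is correct but not needed.

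What Step 3 actually proves is $\omega(Z_{k,\xi})-\omega(Z_{\ell,\xi})=\omega(T')=\omega(G)$, together with $|\omega(G)|\le C/r$. The equality is exact, since the two Hamiltonian cylinders have zero area. But smallness is not the content of this lemma. It already follows from Lemma~\ref{lemma:limit-zk}, because $\omega(Z_{k,\xi})$ and $\omega(Z_{\ell,\xi})$ both converge to $\langle[\alpha_K],\xi\rangle$.

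What is needed is the arithmetic statement that the torus is spherical, and no local estimate can give that. The area of a thin cylinder between two nearby loops can be any small real number. Your closing thin-tube argument only shows that two \emph{thin} cylinders with common boundary differ by a sphere. That makes $\omega(G)$ well defined modulo $\omega(\pi_2(M))$, but it does not put $\omega(G)$ itself in $\omega(\pi_2(M))$. The torus $T'$ is $G$ glued to $E_{k,\xi}\#\ov{E_{\ell,\xi}}$, which is far from thin. Nothing in your argument controls the homotopy class of the loops $t\mapsto(\phi_\ell\# g\#\ov{\phi}_k)^t(x)$ traced by individual points.

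\textbf{The missing idea is the following reduction plus a global input.} It suffices to find one point $x$ on $\xi$ whose loop $t\mapsto(\phi_\ell\#f_\ell\#\ov{f}_k\#\ov{\phi}_k)^t(x)$ is contractible. Then the torus has a null-homotopic circle factor, so up to homotopy it factors through $S^2\vee S^1$, and its area lies in $\omega(\pi_2(M))$.

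Since $M$ is connected, dragging the point along a path gives a free homotopy of these loops. So it is enough to find \emph{some} point of $M$ with contractible loop. This requires a global symplectic input: the paper uses Floer theory, namely existence of contractible periodic orbits of Hamiltonian isotopies on closed $M$. For non-compact $M$, a point outside the supports works.

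Concretely, take the Hamiltonian path $\phi^t_\ell$ followed by $\phi_k^{1-t}\circ\phi_k^{-1}\phi_\ell$, and a fixed point $z$ of $\phi_k^{-1}\phi_\ell$ whose orbit under this path is contractible. At $z$, your loop is this contractible orbit with the loop $(f_\ell\#\ov{f}_k)^t(z)$ inserted at $\phi_\ell(z)=\phi_k(z)$. The inserted loop lies in a small geodesic ball about $\phi(z)$, so it is contractible too.

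\textbf{On the obstacle you flagged.} It is not real. On closed $M$, uniform convergence $\phi_k\to\phi$ forces $\phi_k^{-1}\to\phi^{-1}$ uniformly. Hence the $\phi_k^{-1}$ are equicontinuous, and $\phi_k^{-1}\circ g^t$ is $C^0$-small for large $k,\ell$. However, that route only leaves you with a Hamiltonian path ending at a $C^0$-small map, closed up by a short path. Contractibility of its orbits still needs the fixed-point input above.
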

\begin{proof}
	The argument is inspired by the proof of \cite[Prop.\ 15]{a-c-l-s}. Observe that it suffices to show that there is some point $x$ in the image the loop $\xi$ such that the loop $t \mapsto (\phi_\ell\# f_\ell \#\ov{f}_k\#\ov{\phi}_k)^t(x)$ is contractible. 
	
	Now, since $\{\phi^t_\ell\}$ is Hamiltonian, there exists some point $y \in M$ (possibly far away from $L$) such that $[0,1] \ni t \mapsto \phi^t_\ell(y)$ is a contractible loop: this follows from the (now-classical) well-definedness of Floer homology for non-degenerate Hamiltonians on closed symplectic manifolds.\footnote{If $M$ is non-compact, then by construction $\{\phi^t_\ell\}$ is compactly-supported, so the conclusion is obvious.} 
	
	Without loss of generality, $M$ is connected. Hence we can joint $x$ and $y$ by a path $\sigma: [0,1] \to M$ and $\sigma(0)=x, \sigma(1)=y$. But now the family
	$$(\phi_\ell \# f_\ell \# \ov{f}_k \# \ov{\phi}_k)^t (\sigma(s))$$
	defines a free homotopy from the loop 
	$$t \mapsto (\phi_\ell\#f_\ell\#\ov{f}_k \#\ov{\phi}_k)^t(x)$$ to the loop $$t \mapsto (\phi_\ell\#f_\ell\#\ov{f}_k\#\ov{\phi}_k)^t(y)= (f_\ell\#\ov{f}_k)^t(y).$$
	But by construction of the $f^t_\ell$, the loop $(f_\ell\#\ov{f}_k)(z)$ must be fully contained in a geodesic ball centered at $z=\phi_\ell(y)$. Therefore, this last loop~--~and thus the original one~---~is contractible.  
\end{proof}

\begin{proof}[Proof of Proposition~\ref{proposition:lagrangian-countable_C0}]
	Choose a class $H^1(L, \mathbb{Z})$ and let $\xi$ be a loop on $L$ representing it. By combining Lemmata~\ref{lemma:limit-zk}, \ref{lemma:h1-discrete}, and~\ref{lemma:omega-discrete}, we have $\langle [\alpha_K], \xi \rangle = \omega(Z_{k, \xi})$ for all $k$ large enough. 
	
	Hence $$\langle [\alpha_K], \xi \rangle \in \{\operatorname{im}( \omega(-): H_2(M, L; \mathbb{Z}) \to \mathbb{R} \} \subset \mathbb{R},$$ which is manifestly countable. So the conclusion follows from Lemma~\ref{lemma:elementary-linear}.
\end{proof}

\begin{remark}
	Tracing through the above argument, the only place where we used the assumption that \eqref{equation:top-assumption-body} is discrete was to ensure that $\omega(Z_{k,\xi})$ is eventually independent of $k$. Instead, it would also be enough to assume that $\phi$ can be ``well approximated'' by Hamiltonian diffeomorphisms, i.e.\ the approximating sequence $\{\phi_k\}$ may be chosen so that the relative homotopy class of the path $\phi_k\# f_k$ in $C^0(M,M)$ is constant.
	
	This can always be done if, on a $C^0$-neighbourhood of the identity in $\Ham(M)$, every Hamiltonian diffeomorphism is the time-1 map of a Hamiltonian isotopy $\{\phi^t_H\}$ such that $\max_t d_{C^0}(id,\phi^t_H)\leq Cd_{C^0}(id,\phi^1_H)^\alpha$, for some $C,\alpha>0$ independent of the isotopy. In general, this is a very hard property to prove, but it is known to hold when $M$ is a closed surface or the Euclidean ball (see Remark~3.4 and Lemma~3.2 of~\cite{seyfaddini2013}, respectively). However, in those examples, $\omega(\pi_2(M))$ is discrete so Proposition~\ref{proposition:lagrangian-countable_C0} already does the trick as currently stated.
\end{remark}

\section{Further questions}
\begin{enumerate}
	\item A Legendrian variant: let $(V, \xi)$ be a contact manifold. \emph{Can one find uncountably many closed Legendrian submanifolds which are pairwise Legendrian isotopic, no two of which are connected by a Reeb chord?}
	\item Let $(M^{2n}, \omega)$ be symplectic and let $K \subset M, dim(K) \geq n$ be a submanifold whose normal bundle has a nowhere vanishing section. Prove or disprove: \emph{$K$ admits an uncountable packing if and only if it $K$ is \emph{not} coisotropic.}
	
	The case $dim(K)=n$ is handled by Proposition~\ref{proposition:intro} and Proposition~\ref{prop:nonLag_subcritical}. See Gürel \cite{gurel2008} for a partial result when $dim(K)>n$.
	
	\item  Other variants of Definition~\ref{definition:packing} are certainly possible, in fact arguably more natural. For example, one could define a $C^0$-packing as any collection of \emph{arbitrary} subsets $\{\Sigma_\alpha\}$ such that $\Sigma_\alpha = \phi_\alpha(\Sigma)$ for some $\phi_\alpha \in \ov{\Ham}(M, \omega)$. 
\end{enumerate}

\section*{Acknowledgements}
We thanks Georgios Dimitroglou Rizell and Leonid Polterovich for helpful communications about the status of the questions treated in this paper. We also thank the anonymous referee for several helpful comments.

The discussions which led to this note began when the third author visited ETH Zürich in February 2025; he thanks the Department of Mathematics for its wonderful hospitality. 

\end{document}